\documentclass[11pt]{amsart}

\usepackage[margin=1.12in]{geometry}
\usepackage{amsmath,amssymb,amsthm,mathtools}
\usepackage{array}
\usepackage[colorlinks=true,citecolor=blue,linkcolor=blue,urlcolor=blue]{hyperref}

\newtheorem{theorem}{Theorem}[section]
\newtheorem{corollary}[theorem]{Corollary}
\newtheorem{proposition}[theorem]{Proposition}
\newtheorem{lemma}[theorem]{Lemma}

\theoremstyle{definition}
\newtheorem{definition}[theorem]{Definition}
\newtheorem{problem}[theorem]{Problem}

\theoremstyle{remark}
\newtheorem{remark}[theorem]{Remark}

\DeclareMathOperator{\diag}{diag}
\DeclareMathOperator{\rank}{rank}
\DeclareMathOperator{\nul}{null}
\DeclareMathOperator{\Img}{Im}
\DeclareMathOperator{\Rad}{Rad}

\newcommand{\F}{\mathbb F_2}
\newcommand{\one}{\mathbf 1}

\title[Diagonal parity and loop toggling]{Diagonal parity and loop toggling for symmetric matrices over $\mathbb F_2$}

\author{Mohsen Aliabadi}
\address{Department of Mathematics, Clayton State University, Morrow, GA 30260, USA}
\email{maliabadi@clayton.edu}
\email{mohsenmath88@gmail.com}

\subjclass[2020]{05C50, 05C69, 15A03, 15A18}
\keywords{symmetric matrix over $\mathbb F_2$, diagonal vector, partially looped graph, odd domination, rank-one update, rooted tree, nullity}

\begin{document}

\begin{abstract}
Let $M$ be a symmetric matrix over $\mathbb F_2$, and let $\diag(M)$ be its diagonal vector.  It is known that
\[
        \diag(M)\in \Img(M).
\]
Thus the affine system $Mx=\diag(M)$ is always solvable.  We strengthen this existence statement to a parity rigidity theorem: every solution satisfies
\[
        \diag(M)^T x\equiv \rank(M)\pmod 2 .
\]
For graph matrices this gives a common extension of Sutner's odd-domination theorem and Batal's parity theorem from closed-neighborhood matrices $A(G)+I$ to arbitrary partially looped graph matrices $A(G)+D$.

We also study how rank and nullity change when loops are toggled.  Algebraically, simultaneous loop toggling on the support of a vector $u$ is the diagonal rank-one update $M\mapsto M+uu^T$.  We prove an exact three-case rank and nullity formula for this update.  Finally, for rooted trees with arbitrary binary diagonal labels, we give a finite-state boundary recursion using affine subspaces of $\mathbb F_2^2$.  This recursion counts all generalized odd-domination patterns and implies eventual quasigeometric nullity formulas for complete rooted trees with eventually periodic depth labels.
\end{abstract}

\maketitle

\section{Introduction}

All matrices and vector spaces in this paper are over $\mathbb F_2$.  If $M$ is a square matrix, we write $\diag(M)$ for its diagonal vector, $\Img(M)$ for its column space, and
\[
        \nul(M)=\dim\ker(M).
\]

The motivating graph-theoretic example is the closed-neighborhood matrix
\[
        N(G)=A(G)+I
\]
of a finite simple graph $G$.  A vector $x\in\mathbb F_2^{V(G)}$ is the characteristic vector of an odd dominating set precisely when
\[
        (A(G)+I)x=\one .
\]
Sutner proved that this system is always solvable \cite{Sutner1989}.  Batal proved the stronger parity statement that every solution satisfies
\[
        \one^Tx\equiv \rank(A(G)+I)\pmod 2
\]
\cite{Batal2022}.  These statements concern the special symmetric matrix $A(G)+I$, whose diagonal vector is $\one$.

The point of this note is that the natural linear-algebraic object is not the all-ones vector, but the diagonal vector.  For an arbitrary symmetric matrix $M$ over $\mathbb F_2$, the corresponding affine system is
\[
        Mx=\diag(M).
\]
For graph matrices this allows each vertex to impose either a closed-neighborhood parity condition or an open-neighborhood parity condition.  More precisely, if $\varepsilon\in\mathbb F_2^{V(G)}$ and
\[
        M(G,\varepsilon)=A(G)+D_\varepsilon,
\]
where $D_\varepsilon$ is the diagonal matrix with diagonal vector $\varepsilon$, then the equation
\[
        M(G,\varepsilon)x=\varepsilon
\]
says
\[
        \sum_{w\sim v}x_w+\varepsilon_vx_v=\varepsilon_v
        \qquad (v\in V(G)).
\]
Thus vertices with $\varepsilon_v=1$ impose closed-neighborhood parity, while vertices with $\varepsilon_v=0$ impose open-neighborhood parity.

The first main result is the following intrinsic matrix theorem.

\begin{theorem}\label{thm:main-parity}
Let $M\in\F^{n\times n}$ be symmetric.  Then $\diag(M)\in\Img(M)$.  Moreover, the solution set
\[
        \mathcal S(M)=\{x\in\F^n:Mx=\diag(M)\}
\]
is a nonempty affine subspace of dimension $n-\rank(M)$, and every $x\in\mathcal S(M)$ satisfies
\[
        \diag(M)^Tx\equiv \rank(M)\pmod 2 .
\]
\end{theorem}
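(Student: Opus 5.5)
The first assertion follows from the observation that, over $\F$, the quadratic form of a symmetric matrix is linear. For any $y\in\F^n$,
\[
        y^TMy=\sum_i M_{ii}y_i^2+2\sum_{i<j}M_{ij}y_iy_j=\sum_i M_{ii}y_i=\diag(M)^Ty ,
\]
using $y_i^2=y_i$ and $2=0$. If $y\in\ker(M)$, the left side vanishes, so $\diag(M)\perp\ker(M)$. Since $M$ is symmetric, $\Img(M)=\ker(M^T)^\perp=\ker(M)^\perp$, and therefore $\diag(M)\in\Img(M)$. Hence $\mathcal S(M)$ is nonempty. It is a translate of $\ker M$, so its dimension is $n-\rank(M)$.

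For the parity statement, I first check that $\diag(M)^Tx$ does not depend on the choice of solution. If $x,x'\in\mathcal S(M)$, then $x-x'\in\ker M$, so $\diag(M)^T(x-x')=0$ by the orthogonality just shown. It therefore suffices to compute the value for one convenient solution, or to find an invariant expression for it. Note that $\diag(M)^Tx=x^TMx$ by the identity above, and also $\diag(M)^Tx=x^TMx$ equals $x^T\diag(M)$. So the claim is that $x^TMx\equiv\rank M$ whenever $Mx=\diag(M)$.

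My plan is to reduce $M$ by congruence. Congruence $M\mapsto P^TMP$ with $P$ invertible preserves both the rank and the quadratic form $q(y)=y^TMy$. The linear functional $d_M(y)=\diag(M)^Ty$ equals $q(y)$, so it transforms as $d_{P^TMP}(z)=d_M(Pz)$, that is, $\diag(P^TMP)=P^T\diag(M)$. If $Mx=\diag(M)$, set $z=P^{-1}x$. Then $P^TMPz=P^T\diag(M)=\diag(P^TMP)$, so $z$ solves the transformed system, and $\diag(P^TMP)^Tz=\diag(M)^TPz=\diag(M)^Tx$. Thus both sides of the claimed congruence are invariant under congruence.

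By the standard classification of symmetric bilinear forms over $\F$, $M$ is congruent to a block diagonal matrix built from a zero block together with either $I_r$ (the alternating-free case) or a direct sum of hyperbolic blocks $H=\begin{pmatrix}0&1\\1&0\end{pmatrix}$. Mixed sums of $I$'s and $H$'s can be avoided, because $I_1\oplus H$ is congruent to $I_3$. On a block decomposition both sides are additive, so it suffices to check each block type:
\begin{itemize}
\item A zero block forces $\diag=0$ there and contributes $0$.
\item An $I_1$ block forces $x_i=1$ and contributes $1=\rank I_1$.
\item An $H$ block has zero diagonal, so it contributes $0\equiv 2=\rank H$.
\end{itemize}
Summing over blocks gives $\diag(M)^Tx\equiv\rank(M)\pmod 2$.

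The main obstacle is justifying this normal form cleanly. If I want to avoid citing it, I will instead run an inductive reduction. If some $M_{ii}=1$, split off that $1\times1$ block by symmetric elimination. If all diagonal entries vanish but $M_{ij}=1$, split off the $H$ block on coordinates $i,j$. If $M=0$, stop. Each step is a congruence, so the invariance above carries the statement through the induction.
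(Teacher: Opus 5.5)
Your proof is correct and follows the paper's route: the identity $y^TMy=\diag(M)^Ty$ together with $\Img(M)=(\ker M)^\perp$ gives solvability and constancy on the solution set, and the parity is then read off blockwise from the normal form for symmetric bilinear forms over $\F$ (zero block, $1\times1$ blocks $(1)$, hyperbolic planes). Your explicit check that $\diag(P^TMP)=P^T\diag(M)$ makes precise the change-of-basis step that the paper only asserts. Your inductive splitting argument is a self-contained substitute for citing the classification.
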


The graph-theoretic consequence is immediate.

\begin{corollary}\label{cor:graph}
Let $G$ be a finite simple graph and let $\varepsilon\in\F^{V(G)}$.  Put
\[
        M(G,\varepsilon)=A(G)+D_\varepsilon .
\]
Then the equation
\[
        M(G,\varepsilon)x=\varepsilon
\]
has a solution.  Moreover, every solution satisfies
\[
        \varepsilon^Tx\equiv \rank(M(G,\varepsilon))\pmod 2 .
\]
\end{corollary}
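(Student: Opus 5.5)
The corollary will follow by specializing Theorem~\ref{thm:main-parity} to $M=M(G,\varepsilon)=A(G)+D_\varepsilon$. The plan has three steps.

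\textbf{Symmetry.} Since $G$ is a finite simple graph, $A(G)$ is a symmetric $0/1$ matrix with zero diagonal. The matrix $D_\varepsilon$ is diagonal, hence also symmetric. So $M(G,\varepsilon)$ is a symmetric matrix over $\F$, indexed by $V(G)$ after fixing any ordering of the vertices.

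\textbf{Diagonal vector.} Because $A(G)$ has zero diagonal, the diagonal vector is determined entirely by the loop part:
\[
        \diag(M(G,\varepsilon))=\diag(A(G))+\diag(D_\varepsilon)=0+\varepsilon=\varepsilon .
\]
This identification is the only point that needs care. It is exactly where the hypothesis that $G$ is simple (no loops in $A(G)$) enters, and it is what makes the equation $M(G,\varepsilon)x=\varepsilon$ coincide with the system $Mx=\diag(M)$ of the theorem.

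\textbf{Applying the theorem.} Theorem~\ref{thm:main-parity} gives $\diag(M)\in\Img(M)$, so $M(G,\varepsilon)x=\varepsilon$ has a solution. Every solution lies in $\mathcal S(M)$, so the theorem also gives
\[
        \varepsilon^Tx=\diag(M)^Tx\equiv\rank(M(G,\varepsilon))\pmod 2 .
\]
No real obstacle arises here; all the substance is in Theorem~\ref{thm:main-parity}.

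As a sanity check, taking $\varepsilon=\one$ gives $M=A(G)+I$, and the two conclusions reduce to Sutner's solvability theorem and Batal's parity theorem.
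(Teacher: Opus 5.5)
Your proposal is correct and follows the paper's approach: the paper simply states that the corollary follows immediately from Theorem~\ref{thm:main-parity}. Your check that $M(G,\varepsilon)$ is symmetric with $\diag(M(G,\varepsilon))=\varepsilon$, using that $A(G)$ has zero diagonal, is the one step the paper leaves implicit.
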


The second main result describes loop toggling.  Since toggling all loops on the support of $u$ changes $M$ to $M+uu^T$, the relevant operation is a diagonal rank-one update.

\begin{theorem}\label{thm:update}
Let $M\in\F^{n\times n}$ be symmetric and let $u\in\F^n$.  If $u=0$, then
\[
        \nul(M+uu^T)=\nul(M).
\]
Assume $u\ne0$.

\begin{enumerate}
\item If $u\notin\Img(M)$, then
\[
        \nul(M+uu^T)=\nul(M)-1
        \quad\text{and}\quad
        \rank(M+uu^T)=\rank(M)+1 .
\]

\item If $u\in\Img(M)$, choose $y$ with $My=u$.  Then the scalar $u^Ty$ is independent of the choice of $y$, and
\[
        \nul(M+uu^T)=
        \begin{cases}
        \nul(M),&u^Ty=0,\\
        \nul(M)+1,&u^Ty=1,
        \end{cases}
\]
equivalently
\[
        \rank(M+uu^T)=
        \begin{cases}
        \rank(M),&u^Ty=0,\\
        \rank(M)-1,&u^Ty=1.
        \end{cases}
\]
\end{enumerate}
\end{theorem}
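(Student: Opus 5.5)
The plan is to compare the kernels of $M$ and $M'=M+uu^T$ directly, using the linear functional $x\mapsto u^Tx$. Since $M$ is symmetric, $\Img(M)=\ker(M)^\perp$, where orthogonality is taken with respect to the standard bilinear form (possibly degenerate on subspaces, but $\dim W^\perp=n-\dim W$ always holds). Thus $u\in\Img(M)$ if and only if $u^Tz=0$ for every $z\in\ker M$. The case $u=0$ is trivial, so assume $u\neq 0$.

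First I would record the basic description of $\ker M'$. A vector $x$ lies in $\ker M'$ exactly when $Mx=(u^Tx)\,u$. So $\ker M'$ splits as
\[
K_0=\{x: Mx=0,\ u^Tx=0\}
\quad\text{together with}\quad
K_1=\{x: Mx=u,\ u^Tx=1\}.
\]
Here $K_0=\ker M\cap u^\perp$ is a subspace, and $K_1$ is either empty or a coset of $K_0$. Hence $\nul(M')=\dim K_0$ if $K_1=\emptyset$, and $\nul(M')=\dim K_0+1$ otherwise.

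Next I would settle the well-definedness claim. If $My=My'=u$, then $y-y'\in\ker M$, so $u^T(y-y')=0$ because $u\in\Img(M)=\ker(M)^\perp$. Equivalently, $u^Ty=y^TMy$.

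Now the three cases follow.

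\begin{enumerate}
\item If $u\notin\Img(M)$, then $K_1=\emptyset$. Some $z\in\ker M$ has $u^Tz=1$, so $K_0$ is a hyperplane in $\ker M$. This gives $\nul(M')=\nul(M)-1$.
\item If $u\in\Img(M)$, then $u^\perp\supseteq\ker M$, so $K_0=\ker M$. Moreover $K_1$ is nonempty exactly when some solution $y$ of $My=u$ has $u^Ty=1$. Since $u^Ty$ is constant on solutions, this happens exactly when $u^Ty=1$. This yields $\nul(M)$ or $\nul(M)+1$ as claimed.
\end{enumerate}

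The rank statements then follow from rank--nullity.

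The only step needing care is the identity $\Img(M)=\ker(M)^\perp$ over $\F$. It follows from $\Img(M)=\ker(M^T)^\perp$, which is just row space versus null space. Symmetry is used precisely here.
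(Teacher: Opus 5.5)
Your proof is correct and follows essentially the same route as the paper. Both arguments compare $\ker(M+uu^T)$ with $\ker M$ by splitting according to the value $t=u^Tx$, and both use $\Img(M)=(\ker M)^\perp$ for the hyperplane count and for the independence of $u^Ty$. Your decomposition of the kernel as $K_0\cup K_1$ treats the two cases uniformly, and in case~1 you get $K_1=\emptyset$ directly from the unsolvability of $Mx=u$, whereas the paper pairs $Mx+(u^Tx)u=0$ with a kernel vector $z$ satisfying $u^Tz=1$.
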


Thus the three possible update behaviors are exactly as follows:
\[
\begin{array}{c|c|c}
\text{condition on }u & \Delta\nul & \Delta\rank\\
\hline
u\notin\Img(M) & -1 & +1\\
u\in\Img(M),\ u^Ty=0 & 0 & 0\\
u\in\Img(M),\ u^Ty=1 & +1 & -1 .
\end{array}
\]

The third part concerns rooted trees with arbitrary binary diagonal labels.  The correct finite-state object is an affine boundary state.  For a rooted tree $T$ with root $r$ and binary label $\varepsilon$, define
\[
        M(T,\varepsilon)=A(T)+D_\varepsilon .
\]
For $\alpha,\beta\in\F$, put
\[
        N_T(\alpha,\beta)
        =
        \#\{x\in\F^{V(T)}:M(T,\varepsilon)x=\varepsilon+\alpha e_r,\ x_r=\beta\}.
\]
The parameter $\alpha$ records a possible defect at the root, while $\beta$ records the value at the root.

\begin{theorem}\label{thm:tree-affine-intro}
Let $T$ be a rooted tree with binary diagonal labeling $\varepsilon$.  Then there is an integer $k\ge0$ and a nonempty affine subspace $L_T\subseteq\F^2$ such that
\[
        N_T(\alpha,\beta)=
        \begin{cases}
        2^k,&(\alpha,\beta)\in L_T,\\
        0,&(\alpha,\beta)\notin L_T.
        \end{cases}
\]
Moreover, the pair $(L_T,k)$ is obtained recursively from the corresponding pairs of the child subtrees.
\end{theorem}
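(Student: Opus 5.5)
We prove Theorem~\ref{thm:tree-affine-intro} by induction on $|V(T)|$, via a slightly stronger and purely linear-algebraic observation. Consider the affine map
\[
\Phi:\F^{V(T)}\to\F^{V(T)}\times\F,\qquad x\mapsto \bigl(M(T,\varepsilon)x+\varepsilon,\;x_r\bigr).
\]
The set $\{x: M x=\varepsilon+\alpha e_r\}$ is the preimage of the affine line $\{(\alpha e_r,\beta)\}$. More usefully, the set
\[
W_T=\{(x,\alpha,\beta): Mx=\varepsilon+\alpha e_r,\ x_r=\beta\}
\]
is an affine subspace of $\F^{V(T)}\times\F^2$. Its projection $L_T$ to the $(\alpha,\beta)$ coordinates is affine. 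Each nonempty fibre is a coset of the same linear space
\[
K=\{x: Mx=0\ \text{off } r,\ x_r=0,\ (Mx)_r=0\}=\{x\in\ker M: x_r=0\}.
\]
Hence every nonempty fibre has size $2^k$ with $k=\dim K$. This already gives the dichotomy, except for nonemptiness of $L_T$. Nonemptiness follows from Theorem~\ref{thm:main-parity}: $\mathcal S(M(T,\varepsilon))\neq\emptyset$, so some $(0,\beta)\in L_T$.

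The real content is the recursive description, and I would derive it explicitly. Let the root $r$ have children $c_1,\dots,c_m$ with subtrees $T_1,\dots,T_m$ and data $(L_{T_i},k_i)$. A vector $x$ on $T$ is a choice of $x_r=\beta$ together with vectors $x^{(i)}$ on the $T_i$.

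At a non-root vertex of $T_i$ other than $c_i$, the equations are unchanged. At $c_i$, the equation picks up the extra term $x_r=\beta$. So $x^{(i)}$ must satisfy the $T_i$ system with root defect $\alpha_i=\beta$, and its root value is $\beta_i=x_{c_i}$.

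The root equation reads
\[
\sum_i\beta_i+\varepsilon_r\beta=\varepsilon_r+\alpha .
\]
Therefore
\[
N_T(\alpha,\beta)=\sum_{(\beta_1,\dots,\beta_m)}\ \prod_i N_{T_i}(\beta,\beta_i),
\]
where the sum runs over tuples satisfying the root equation. Each factor $N_{T_i}(\beta,\cdot)$ is supported on the slice $S_i(\beta)=\{\gamma:(\beta,\gamma)\in L_{T_i}\}$, which is empty, a single point, or all of $\F$. One then tabulates $L_T$ and $k$ by cases:

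\begin{itemize}
\item If some slice $S_i(\beta)$ is empty, then $\beta$ is excluded.
\item If some slice is all of $\F$, the parity $\sum\beta_i$ is free. Both values of $\alpha$ then occur, and $k$ gains the term $(\#\{i:S_i(\beta)=\F\})-1$.
\item Otherwise $\alpha$ is determined by $\beta$, and no extra term arises.
\end{itemize}

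In every case $k=\sum_i k_i+(\text{a correction read off from the slices})$, and $L_T$ is a computable function of the $L_{T_i}$ and $\varepsilon_r$. The leaf base case has $L=\{(\alpha,\beta):\varepsilon_r\beta=\varepsilon_r+\alpha\}$ and $k=0$.

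The main obstacle is bookkeeping consistency. The correction must be the same for both values of $\beta\in\pi_2(L_T)$. This is guaranteed abstractly by the first paragraph, since all nonempty fibres have size $2^{\dim K}$, so the recursion need only be evaluated at one point of $L_T$. I would present the recursion in that form to avoid enumerating the finitely many affine subspaces of $\F^2$ case by case.
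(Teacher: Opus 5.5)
Your proof is correct, but it reaches the dichotomy by a different route from the paper.

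\textbf{The paper's route.} The paper proves the $2^k$-or-$0$ form and the recursion together, by induction on height. It writes $N_T(\alpha,\beta)$ as the same sum of products you derive. It then notes that the admissible $(\alpha,\beta)$ form the projection of the affine solution set of the joint system in $(\alpha,\beta,\gamma_1,\dots,\gamma_m)$. Constancy of the count comes from the fact that the $\gamma$-solution set has a fixed linear part. This gives $k=\sum_i k_i+s$, where $2^s$ is the number of admissible $\gamma$-tuples.

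\textbf{Your route.} You observe that $W_T$ is affine and that its fibres over $L_T$ are cosets of $K=\{x\in\ker M:x_r=0\}$. So the dichotomy needs no induction and no tree structure. It holds for any matrix and any distinguished coordinate, with symmetry used only for nonemptiness via Theorem~\ref{thm:main-parity}. You also get an intrinsic description, $k=\dim\{x\in\ker M(T,\varepsilon):x_r=0\}$, which the paper's recursive definition of $k$ does not make visible.

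\textbf{The recursion.} In your argument the tree is needed only for the recursion. There your slice case analysis makes the paper's $s$ explicit:
\begin{itemize}
\item $s=\#\{i:S_i(\beta)=\F\}-1$ when no slice is empty and some slice is full;
\item $s=0$ when all slices are singletons.
\end{itemize}
Consistency of this correction across the two values of $\beta$ comes from your global fibre argument rather than from the affineness of the $\gamma$-system. It can also be checked directly: if both slices $S_i(0),S_i(1)$ of an affine $L_{T_i}\subseteq\F^2$ are nonempty, they have the same size.

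\textbf{Trade-off.} The paper's single induction stays inside the tree framework and avoids case splits. Your version separates a general linear-algebra fact from the tree-specific bookkeeping and makes the recursion fully explicit.
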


The explicit recursion is given and proved in Section~\ref{sec:trees}.  For complete rooted $d$-ary trees with eventually periodic depth labels, the recursion implies eventual affine behavior in powers of $d$.

\begin{theorem}\label{thm:eventual-intro}
Let $d\ge2$, and let $a=(a_0,a_1,a_2,\ldots)$ be an eventually periodic binary sequence.  Let $\varepsilon_h$ be the induced depth-dependent labeling of the complete rooted $d$-ary tree $T_h^{(d)}$.  Then there exist integers $h_0\ge0$ and $p\ge1$ such that, for each residue class modulo $p$, there are rational constants $c_r,b_r$ satisfying
\[
        \nul M(T_h^{(d)},\varepsilon_h)=c_r d^h+b_r
\]
for all sufficiently large $h\equiv r\pmod p$.
\end{theorem}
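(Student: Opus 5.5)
We prove Theorem~\ref{thm:eventual-intro} by running the recursion of Theorem~\ref{thm:tree-affine-intro} along the depth sequence. First relate nullity to the boundary data. The solution set of $M(T,\varepsilon)x=\varepsilon$ is nonempty by Theorem~\ref{thm:main-parity}. Its size is $\sum_\beta N_T(0,\beta)$, and it is also $2^{\nul M(T,\varepsilon)}$. Hence
\[
\nul M(T,\varepsilon)=k+\log_2\#\{\beta:(0,\beta)\in L_T\}.
\]
This holds because $(0,\beta)\in L_T$ for at least one $\beta$, so the second term is $0$ or $1$. It therefore suffices to understand the pairs $(L_{T_h},k_h)$.

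Next, in $T_h^{(d)}$ with depth labels, every subtree rooted at a depth-$j$ vertex is a complete $d$-ary tree of height $h-j$ with labels given by the shifted sequence $(a_j,a_{j+1},\dots)$. So we work from the leaves upward. Let $S_j$ denote the boundary pair of a subtree rooted at depth $j$ in $T_h$. Then $S_h$ is determined by $a_h$, and $S_j$ is obtained from $a_j$ and $d$ identical copies of $S_{j+1}$ via the recursion of Section~\ref{sec:trees}. The affine part $L$ ranges over a finite set: the nonempty affine subspaces of $\F^2$, of which there are $9$. I expect the recursion to have the form
\[
L_{\text{parent}}=\Phi_{a_j}(L_{\text{child}}),\qquad
k_{\text{parent}}=d\,k_{\text{child}}+\gamma(a_j,L_{\text{child}}),
\]
where $\gamma$ is a bounded integer correction. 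The term $d\,k$ comes from the counts multiplying over the $d$ independent children. The correction $\gamma$ comes from summing over free child-root values, and it depends only on the label and the child state. I will extract this form from the explicit recursion; this is the step to check carefully.

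Now handle the periodicity, which is the main obstacle, because the recursion starts at the leaves, i.e.\ at depth $h$, and the phase there varies with $h$. Suppose $a$ is periodic with period $p$ beyond index $m$. For $h\ge m$, the labels from depth $h$ down to depth $m$ depend only on $h\bmod p$. The leaf state and the map sequence are determined by the residue, so the states $L$ at depths $h,h-1,\dots$ follow a sequence determined by $h\bmod p$. As the height increases by $p$, we compose one more full period of maps $\Psi=\Phi_{a_{j}}\circ\cdots\circ\Phi_{a_{j+p-1}}$, suitably phased. Because the state set is finite, the orbit under $\Psi$ is eventually periodic. Replacing $p$ by a multiple $p'$ therefore makes the $L$-state at depth $m$ eventually constant along each residue class of $h$ mod $p'$.

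Once that holds, iterate the recursion for $k$ over one block of $p'$ levels. The recursion is affine, $k\mapsto d^{p'}k+C$, with the same $C$ each time within the residue class. Solving this gives $k=A d^{h}+B$ with rational $A$ and $B$, after absorbing the fixed finite prefix of depths $0,\dots,m-1$. The prefix acts by a fixed affine map $k\mapsto d^m k+C'$, so the form is preserved. The final $L_{T_h}$ is also eventually constant on each class, so the added term $\log_2\#\{\beta\}$ is a constant. This yields $\nul=c_rd^h+b_r$.
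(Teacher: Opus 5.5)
Your argument is sound, but it is written for a different labeling than the paper's. You read ``depth-dependent'' as indexed from the root: a vertex at depth $j$ gets $a_j$, so the leaves of $T_h^{(d)}$ get $a_h$. The paper's definition (preceding Theorem~\ref{thm:eventual}) indexes by height instead: a vertex whose descendant subtree has height $j$ gets $a_j$, so leaves always get $a_0$ and the root gets $a_h$. As written, your proof does not establish the statement for the labeling the paper actually uses.

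With the paper's convention, $T_{h+1}^{(d)}$ is just a root labeled $a_{h+1}$ joined to $d$ copies of $T_h^{(d)}$ with unchanged labels. Theorem~\ref{thm:tree-recursion} then gives the forward iteration $L_{h+1}=\Phi_{a_{h+1}}(L_h)$ and $k_{h+1}=dk_h+\gamma(a_{h+1},L_h)$. This form is read off directly from $k=\sum_i k_i+s$, so the step you flagged needs no separate check. The pair (current $L_h$, phase of $a$) takes values in a finite set, so it is eventually periodic. The paper then solves $k_{h+p}=d^pk_h+C$ on residue classes and adds the bounded periodic correction from Corollary~\ref{cor:tree-nullity}. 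The ``main obstacle'' you identify, namely the leaf phase varying with $h$ and each new period being inserted at the top of the tree, does not arise in that setting.

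Your extra work does correctly prove the analogous statement for the root-indexed family, which is a genuinely different and somewhat harder family of labelings. That extra work is tracking the boundary pair at the fixed depth $m$, observing $S_m^{(h+p)}=\Psi(S_m^{(h)})$, and passing to a multiple $p'$ of $p$ so that this state is eventually constant on residue classes. It rests on the same two ingredients as the paper: finitely many states for $L$, and the affine law $k\mapsto dk+\gamma$. To prove the theorem as the paper defines it, switch to the height-indexed labeling, and your argument collapses to the paper's.

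A minor slip: $\F^2$ has $11$ nonempty affine subspaces ($4$ points, $6$ lines, and the plane), not $9$. Only finiteness is used, so nothing else changes.
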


\section{The diagonal vector and the parity theorem}

We first prove the intrinsic matrix statement.  The following argument is often attributed to Noga Alon in an unpublished note of Filmus \cite{Filmus2010}; related forms also appear in the Lights Out literature, for example in \cite{Minevich2012}.

\begin{proposition}\label{prop:diag-range}
Let $M\in\F^{n\times n}$ be symmetric.  Then
\[
        \diag(M)\in\Img(M).
\]
\end{proposition}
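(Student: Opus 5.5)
We argue by duality: since $M$ is symmetric, $\Img(M)=\Img(M^T)=\ker(M)^{\perp}$ with respect to the standard bilinear form $\langle x,y\rangle=x^Ty$ on $\F^n$. This is the standard fact that the column space is the orthogonal complement of the left kernel, which holds over any field by dimension counting: $\dim\ker(M^T)^{\perp}=n-\nul(M^T)=\rank(M)$. It therefore suffices to show that $\diag(M)^Tz=0$ for every $z\in\ker(M)$.

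The key identity is the computation of the quadratic form $z^TMz$ over $\F$. Expanding,
\[
        z^TMz=\sum_{i}M_{ii}z_i^2+\sum_{i<j}(M_{ij}+M_{ji})z_iz_j .
\]
By symmetry $M_{ij}+M_{ji}=2M_{ij}=0$ in $\F$, so the off-diagonal part vanishes. Since $z_i^2=z_i$ in $\F$, we are left with
\[
        z^TMz=\sum_i M_{ii}z_i=\diag(M)^Tz .
\]
Now if $Mz=0$, then $z^TMz=0$, hence $\diag(M)^Tz=0$. Thus $\diag(M)\perp\ker(M)$, and therefore $\diag(M)\in\ker(M)^{\perp}=\Img(M)$.

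There is essentially no obstacle here. The only point requiring care is the duality step $\Img(M)=\ker(M^T)^{\perp}$. Over $\F$ the form $x^Ty$ is degenerate on some subspaces, but it is nondegenerate on the whole space $\F^n$, so $\dim W^\perp=n-\dim W$ still holds for every subspace $W$. Combined with the trivial inclusion $\Img(M)\subseteq\ker(M^T)^\perp$, since $(Mx)^Tz=x^TM^Tz$, this gives equality. I would state this briefly and then present the quadratic-form computation as the heart of the proof.
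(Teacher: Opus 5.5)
Your proposal is correct and follows the paper's argument: for symmetric $M$ you use $\Img(M)=(\ker M)^\perp$, and you observe that $z^TMz=\diag(M)^Tz$ over $\F$, since the off-diagonal terms cancel in pairs and $z_i^2=z_i$. Your justification of the duality step via $\dim W^\perp=n-\dim W$ is a detail the paper leaves implicit.
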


\begin{proof}
Since $M$ is symmetric,
\[
        \Img(M)=(\ker M)^\perp
\]
with respect to the standard dot product.  It is therefore enough to show that $\diag(M)$ is orthogonal to every vector in $\ker M$.

Let $z\in\ker M$.  Then
\[
        0=z^TMz.
\]
On the other hand, over $\F$, the off-diagonal terms in $z^TMz$ occur in equal pairs and cancel, while $z_i^2=z_i$.  Hence
\[
        z^TMz=\sum_i M_{ii}z_i=\diag(M)^Tz.
\]
Thus $\diag(M)^Tz=0$ for every $z\in\ker M$, and so $\diag(M)\in(\ker M)^\perp=\Img(M)$.
\end{proof}

\begin{lemma}\label{lem:constant}
Let $M\in\F^{n\times n}$ be symmetric and let $b\in\Img(M)$.  Then the scalar $b^Tx$ is constant on the affine solution set
\[
        \{x\in\F^n:Mx=b\}.
\]
\end{lemma}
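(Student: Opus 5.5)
The plan is to compare two arbitrary solutions of $Mx=b$ and show that $b^Tx$ takes the same value at both, using symmetry of $M$ and the description of the solution set as a coset of $\ker M$.

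Since $b\in\Img(M)$, the solution set $\{x:Mx=b\}$ is nonempty. If $x_0$ is one solution, every solution has the form $x=x_0+z$ with $z\in\ker M$. Hence it suffices to prove that $b^Tz=0$ for every $z\in\ker M$: then
\[
b^T(x_0+z)=b^Tx_0+b^Tz=b^Tx_0 .
\]

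To show $b^Tz=0$, write $b=Mx_0$. Symmetry of $M$ gives
\[
b^Tz=(Mx_0)^Tz=x_0^TM^Tz=x_0^T(Mz)=0 .
\]
Equivalently, this is the fact $\Img(M)=(\ker M)^\perp$ already used in the proof of Proposition~\ref{prop:diag-range}, applied to $b$ instead of $\diag(M)$.

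No step is a genuine obstacle; the argument is a two-line computation. The only point to state explicitly is that symmetry is what allows $M^T$ to be replaced by $M$. This is also why the lemma concerns the pairing $b^Tx$ and not some other linear functional of $x$.
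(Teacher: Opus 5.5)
Your proof is correct and is essentially the paper's argument: two solutions differ by an element of $\ker M$, and $b$ is orthogonal to $\ker M$. The only cosmetic difference is that you verify $b^Tz=x_0^TMz=0$ directly from symmetry, which uses only the easy inclusion $\Img(M)\subseteq(\ker M)^\perp$, whereas the paper cites the equality $\Img(M)=(\ker M)^\perp$.
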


\begin{proof}
Let $x$ and $x'$ be two solutions.  Then $x-x'\in\ker M$.  Since $b\in\Img(M)=(\ker M)^\perp$, we have
\[
        b^T(x-x')=0.
\]
Therefore $b^Tx=b^Tx'$.
\end{proof}

We use the following standard normal form for symmetric bilinear forms in characteristic two.

\begin{lemma}\label{lem:normal-form}
Let $V$ be a finite-dimensional vector space over $\F$, and let $B$ be a symmetric bilinear form on $V$.  Then $V$ admits an orthogonal decomposition
\[
        V=\Rad(B)\perp U_1\perp\cdots\perp U_a\perp H_1\perp\cdots\perp H_b,
\]
where each $U_i$ is one-dimensional with Gram matrix $(1)$ and each $H_j$ is two-dimensional with Gram matrix
\[
        \begin{pmatrix}
        0&1\\
        1&0
        \end{pmatrix}.
\]
In particular,
\[
        \rank(B)=a+2b.
\]
\end{lemma}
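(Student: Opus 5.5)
The plan is to argue by induction on $\dim V$, splitting off one nondegenerate piece at a time. The splitting tool is the standard fact that if $W\subseteq V$ is a subspace on which $B|_W$ is nondegenerate, then $V=W\perp W^\perp$. To prove this, note that the map $V\to W^*$ given by $v\mapsto B(v,\cdot)|_W$ is surjective, because its restriction to $W$ is already an isomorphism. Its kernel is $W^\perp$, so $\dim W^\perp=\dim V-\dim W$. Nondegeneracy also gives $W\cap W^\perp=0$, and the decomposition follows.

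\textbf{Base case.} If $B$ is identically zero, then $V=\Rad(B)$ and there is nothing to prove.

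\textbf{Inductive step.} Suppose $B\neq 0$. There are two cases.

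First, suppose some $v$ has $B(v,v)=1$. Put $U_1=\langle v\rangle$. Its Gram matrix is $(1)$, which is nondegenerate, so $V=U_1\perp U_1^\perp$. We then apply induction to $B$ restricted to $U_1^\perp$.

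Second, suppose $B(v,v)=0$ for all $v$, so $B$ is alternating. Since $B\ne0$, pick $v,w$ with $B(v,w)=1$. Then $v,w$ are linearly independent: if $w=cv$, then $B(v,w)=cB(v,v)=0$, a contradiction. Their Gram matrix is $\begin{pmatrix}0&1\\1&0\end{pmatrix}$, which is invertible, so $H=\langle v,w\rangle$ splits off as $V=H\perp H^\perp$, and we again apply induction to $H^\perp$.

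\textbf{Collecting the pieces.} In each case the complement decomposes by the induction hypothesis. Concatenating gives the desired orthogonal sum, in which the pieces $U_i$ and $H_j$ may appear interleaved; reordering is harmless.

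It remains to check that the last summand really is $\Rad(B)$. Write $V=R\perp N$, where $N$ is the direct sum of all the $U_i$ and $H_j$, and $R$ is the final piece, on which $B$ vanishes identically. Then $R\subseteq\Rad(B)$, because $R$ is orthogonal to $N$ and $B|_R=0$. For the reverse inclusion, take $x=r+n\in\Rad(B)$ with $r\in R$ and $n\in N$. Pairing $x$ with $N$ gives $B(n,N)=0$, and since $B|_N$ is nondegenerate this forces $n=0$. Hence $R=\Rad(B)$.

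\textbf{Rank.} In the adapted basis the Gram matrix is block diagonal, with a zero block, $a$ blocks $(1)$, and $b$ invertible $2\times2$ blocks. Its rank is therefore $a+2b$.

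The only step that needs real care is the splitting lemma, namely the dimension count via the surjection onto $W^*$. The characteristic-two subtlety, that anisotropic vectors may fail to exist, is exactly what the alternating case handles by producing a hyperbolic plane instead.
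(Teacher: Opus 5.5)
Your proof is correct, and it takes a different route from the paper's.

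The paper first passes to $V/\Rad(B)$, where the form becomes nondegenerate. It then cites the classification of nondegenerate symmetric forms over $\F$ (Pless) as an orthogonal sum of $(1)$-blocks and hyperbolic planes, lifts a basis back to $V$, and adjoins a basis of the radical.

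You never form the quotient. Instead you:
\begin{enumerate}
\item prove the splitting $V=W\perp W^\perp$ for any $W$ with $B|_W$ nondegenerate, which needs only that hypothesis on $W$ and not on $V$;
\item peel off anisotropic lines while $B$ is non-alternating, and hyperbolic planes once it is alternating and nonzero;
\item identify the leftover totally isotropic piece $R$ with $\Rad(B)$ at the end, using nondegeneracy of $B|_N$.
\end{enumerate}

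What your route buys is self-containedness. It actually proves the fact the paper outsources to the literature: in the nondegenerate case your residual piece is $0$. It also needs no lifting step.

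The price is the final verification that the residual piece is exactly $\Rad(B)$. The quotient-and-lift approach gets this for free, because the radical is split off first. Your check of that step is correct.
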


\begin{proof}
This is the standard classification of symmetric bilinear forms over $\F$; see, for example, Pless \cite{Pless1964}.  Passing to $V/\Rad(B)$ gives a nondegenerate symmetric bilinear form.  Such a form is an orthogonal sum of one-dimensional nonalternating blocks and hyperbolic alternating planes.  Lifting a basis from the quotient and adjoining a basis for the radical gives the stated decomposition.
\end{proof}

\begin{proof}[Proof of Theorem~\ref{thm:main-parity}]
By Proposition~\ref{prop:diag-range}, the system $Mx=\diag(M)$ is solvable.  Hence its solution set is a nonempty affine translate of $\ker M$, and has dimension $n-\rank(M)$.

Let $B(v,w)=v^TMw$.  By Lemma~\ref{lem:normal-form}, there is a basis in which the Gram matrix of $B$ is block diagonal with a radical block, $a$ one-dimensional blocks $(1)$, and $b$ hyperbolic blocks
\[
        \begin{pmatrix}0&1\\1&0\end{pmatrix}.
\]
Thus $\rank(M)=a+2b\equiv a\pmod 2$.

In this basis, the representing vector of the linear functional $v\mapsto v^TMv$ has coordinate $1$ on each one-dimensional block and coordinate $0$ on the radical and hyperbolic blocks.  Therefore the equation $Mx=\diag(M)$ forces the coordinates of $x$ on the one-dimensional blocks to be $1$ and imposes no contribution to $\diag(M)^Tx$ from the radical or hyperbolic blocks.  Hence
\[
        \diag(M)^Tx=a\equiv\rank(M)\pmod 2.
\]
This proves the parity statement in the chosen basis.  Since the identity is invariant under change of basis, it holds in the original coordinates.  The constancy over the solution set also follows from Lemma~\ref{lem:constant}, applied to $b=\diag(M)$.
\end{proof}

\begin{corollary}\label{cor:inverse}
If $M\in\F^{n\times n}$ is symmetric and invertible, then
\[
        \diag(M)^TM^{-1}\diag(M)\equiv n\pmod 2.
\]
\end{corollary}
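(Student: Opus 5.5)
This is a direct specialization of Theorem~\ref{thm:main-parity}. Since $M$ is invertible, $\rank(M)=n$ and $\ker M=0$. The affine system $Mx=\diag(M)$ therefore has exactly one solution, namely
\[
        x_0=M^{-1}\diag(M).
\]
Solvability is already guaranteed by Proposition~\ref{prop:diag-range}, though for invertible $M$ it is trivial anyway.

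Next I apply the parity statement of Theorem~\ref{thm:main-parity} to this unique element of $\mathcal S(M)$. It gives
\[
        \diag(M)^Tx_0\equiv\rank(M)=n\pmod 2 .
\]
Substituting $x_0=M^{-1}\diag(M)$ yields
\[
        \diag(M)^TM^{-1}\diag(M)\equiv n\pmod 2,
\]
which is exactly the claim.

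I do not expect any real obstacle. The only point worth checking is that the parity is well defined, but this is automatic because the solution is unique. It is also consistent with Lemma~\ref{lem:constant}. If desired, one could remark that $M^{-1}$ is again symmetric, so the expression is the value at $\diag(M)$ of the quadratic form attached to $M^{-1}$. This remark is not needed for the proof.
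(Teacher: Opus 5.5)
Your proposal is correct and matches the paper's proof: take the unique solution $x=M^{-1}\diag(M)$ of $Mx=\diag(M)$ and apply the parity statement of Theorem~\ref{thm:main-parity} with $\rank(M)=n$.
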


\begin{proof}
The unique solution of $Mx=\diag(M)$ is $x=M^{-1}\diag(M)$.  Since $\rank(M)=n$, the claim follows from Theorem~\ref{thm:main-parity}.
\end{proof}

\section{Partially looped graphs and loop toggling}

Let $G$ be a finite simple graph with vertex set $V$, and let $\varepsilon\in\F^V$.  Define
\[
        M(G,\varepsilon)=A(G)+D_\varepsilon,
\]
where $D_\varepsilon$ is the diagonal matrix with diagonal vector $\varepsilon$.

\begin{definition}
A vector $x\in\F^V$ is an $\varepsilon$-odd dominating pattern if
\[
        M(G,\varepsilon)x=\varepsilon.
\]
Equivalently, for each vertex $v$,
\[
        \sum_{w\sim v}x_w+\varepsilon_vx_v=\varepsilon_v.
\]
Thus vertices with $\varepsilon_v=1$ impose closed-neighborhood parity, while vertices with $\varepsilon_v=0$ impose open-neighborhood parity.
\end{definition}

Corollary~\ref{cor:graph} follows immediately from Theorem~\ref{thm:main-parity}.  When $\varepsilon=\one$, it recovers Sutner's existence theorem and Batal's parity theorem for ordinary odd domination.

We now prove the loop-toggling theorem.

\begin{proof}[Proof of Theorem~\ref{thm:update}]
The case $u=0$ is immediate.  Assume $u\ne0$.

First suppose $u\notin\Img(M)$.  Since $M$ is symmetric, $\Img(M)=(\ker M)^\perp$.  Hence there exists $z\in\ker M$ with $u^Tz=1$.  If $x\in\ker(M+uu^T)$, then
\[
        Mx+u(u^Tx)=0.
\]
Taking the dot product with $z$ gives
\[
        0=z^TMx+(z^Tu)(u^Tx)=u^Tx.
\]
Thus $u^Tx=0$, and the equation reduces to $Mx=0$.  Therefore
\[
        \ker(M+uu^T)=\{x\in\ker M:u^Tx=0\}.
\]
Because $u^Tz=1$ for some $z\in\ker M$, this is a codimension-one subspace of $\ker M$.  Hence
\[
        \nul(M+uu^T)=\nul(M)-1.
\]
The rank formula follows from rank-nullity.

Now suppose $u\in\Img(M)$ and choose $y$ with $My=u$.  If $y'$ is another such vector, then $y-y'\in\ker M$.  Since $u\in\Img(M)=(\ker M)^\perp$, we have
\[
        u^T(y-y')=0.
\]
Thus $u^Ty$ is independent of the choice of $y$.

For $x\in\F^n$, the equation $(M+uu^T)x=0$ is equivalent to
\[
        Mx=(u^Tx)u=(u^Tx)My.
\]
Write $t=u^Tx$.  Then
\[
        M(x+ty)=0,
\]
so every solution has the form
\[
        x=z+ty,
        \qquad z\in\ker M,
        \quad t\in\F.
\]
Conversely, such a vector satisfies
\[
        u^Tx=u^Tz+t u^Ty=t u^Ty,
\]
because $u^Tz=0$ for every $z\in\ker M$.  Since $t$ was defined as $u^Tx$, the necessary and sufficient condition is
\[
        t=t u^Ty.
\]
If $u^Ty=0$, this forces $t=0$, so $x=z\in\ker M$ and the nullity is unchanged.  If $u^Ty=1$, then $t$ is free.  Since $My=u\ne0$, we have $y\notin\ker M$, and the vectors $z+ty$ form one additional independent direction modulo $\ker M$.  Therefore the nullity is respectively $\nul(M)$ or $\nul(M)+1$.  The rank formulas again follow from rank-nullity.
\end{proof}

\begin{corollary}\label{cor:single-loop}
Let $G$ be a graph, let $\varepsilon\in\F^{V(G)}$, and let $\varepsilon'$ be obtained from $\varepsilon$ by toggling the value at a vertex $v$.  Put
\[
        M=M(G,\varepsilon),
        \qquad
        M'=M(G,\varepsilon').
\]
Then $M'=M+e_ve_v^T$, and exactly one of the following occurs:
\[
\begin{array}{c|c|c}
\text{condition} & \nul(M')-\nul(M) & \rank(M')-\rank(M)\\
\hline
e_v\notin\Img(M) & -1 & +1\\
e_v\in\Img(M),\ y_v=0\text{ for }My=e_v & 0 & 0\\
e_v\in\Img(M),\ y_v=1\text{ for }My=e_v & +1 & -1 .
\end{array}
\]
\end{corollary}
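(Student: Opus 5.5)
This is a direct specialization of Theorem~\ref{thm:update} to $u=e_v$, so the plan is to check the matrix identity, then translate each hypothesis and the scalar $u^Ty$ into the vertex notation.

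\textbf{Step 1: the matrix identity.} Since $\varepsilon'=\varepsilon+e_v$, we have $D_{\varepsilon'}=D_\varepsilon+\diag(e_v)$. The matrix $e_ve_v^T$ has a single nonzero entry, namely $1$ in position $(v,v)$. Hence $\diag(e_v)=e_ve_v^T$, and
\[
M'=A(G)+D_{\varepsilon'}=M+e_ve_v^T.
\]
Also $M$ is symmetric, because $A(G)$ is symmetric and $D_\varepsilon$ is diagonal.

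\textbf{Step 2: apply Theorem~\ref{thm:update} with $u=e_v$.} Since $u=e_v\neq0$, the degenerate case $u=0$ does not arise.
\begin{itemize}
\item If $e_v\notin\Img(M)$, part~(1) of Theorem~\ref{thm:update} gives $\Delta\nul=-1$ and $\Delta\rank=+1$. This is the first row of the table.
\item If $e_v\in\Img(M)$, pick $y$ with $My=e_v$. Then $u^Ty=e_v^Ty=y_v$. Theorem~\ref{thm:update} says this scalar does not depend on the choice of $y$, so the table's conditions "$y_v=0$" and "$y_v=1$" are well defined. Part~(2) then gives $(\Delta\nul,\Delta\rank)=(0,0)$ when $y_v=0$ and $(+1,-1)$ when $y_v=1$.
\end{itemize}

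\textbf{Step 3: exactly one case occurs.} Either $e_v\in\Img(M)$ or it is not. In the first situation $y_v$ is a well-defined element of $\F$, so it equals exactly one of $0$ and $1$. The three rows are therefore mutually exclusive and exhaustive.

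Nothing here is difficult. The only point needing care is the independence of $y_v$ from the choice of $y$, and that is already part of Theorem~\ref{thm:update}.
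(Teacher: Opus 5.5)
Your proposal is correct and matches the paper's approach: the paper gives no separate proof and treats this as the specialization $u=e_v$ of Theorem~\ref{thm:update}, with $u^Ty=e_v^Ty=y_v$ and the well-definedness of $y_v$ inherited from that theorem. One cosmetic point: the paper uses $\diag(\cdot)$ only for the diagonal vector of a matrix, so the diagonal matrix you call $\diag(e_v)$ should be written $D_{e_v}$, which equals $e_ve_v^T$.
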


\section{Rooted trees with arbitrary diagonal labels}\label{sec:trees}

Let $T$ be a rooted tree with root $r$ and binary labeling $\varepsilon:V(T)\to\F$.  Put
\[
        M(T,\varepsilon)=A(T)+D_\varepsilon.
\]
For $\alpha,\beta\in\F$, define
\[
        N_T(\alpha,\beta)
        =
        \#\{x\in\F^{V(T)}:M(T,\varepsilon)x=\varepsilon+\alpha e_r,\ x_r=\beta\}.
\]
The parameter $\alpha$ records a possible defect at the root, and $\beta$ records the root value.

\begin{theorem}\label{thm:tree-recursion}
Let $T$ be a rooted tree with binary diagonal labeling $\varepsilon$.  Then there is an integer $k\ge0$ and a nonempty affine subspace $L_T\subseteq\F^2$ such that
\[
        N_T(\alpha,\beta)=
        \begin{cases}
        2^k,&(\alpha,\beta)\in L_T,\\
        0,&(\alpha,\beta)\notin L_T.
        \end{cases}
\]

More precisely, suppose the child subtrees of the root are $T_1,\ldots,T_m$, rooted at $r_1,\ldots,r_m$.  Suppose inductively that
\[
        N_{T_i}(\alpha,\beta)=2^{k_i}{\bf 1}_{L_i}(\alpha,\beta),
\]
where $L_i\subseteq\F^2$ is an affine subspace.  Then $L_T$ is the set of all pairs $(\alpha,\beta)\in\F^2$ for which there exist $\gamma_1,\ldots,\gamma_m\in\F$ such that
\[
        (\beta,\gamma_i)\in L_i
        \qquad (1\le i\le m)
\]
and
\[
        \sum_{i=1}^m\gamma_i+\varepsilon(r)\beta=\varepsilon(r)+\alpha.
\]
For each $(\alpha,\beta)\in L_T$, the number of such tuples $(\gamma_1,\ldots,\gamma_m)$ is independent of $(\alpha,\beta)$; if this number is $2^s$, then
\[
        k=\sum_{i=1}^m k_i+s.
\]
\end{theorem}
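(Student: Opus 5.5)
We argue by induction on the number of vertices of $T$. The plan is to write the system $M(T,\varepsilon)x=\varepsilon+\alpha e_r$ in block form relative to the root and its child subtrees, and to identify each subtree block with the defining system of $N_{T_i}$ for a suitable defect parameter.

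\emph{Base case.} If $T$ is a single vertex, the system is the one equation $\varepsilon(r)x_r=\varepsilon(r)+\alpha$. Hence $L_T=\{(\alpha,\beta):\varepsilon(r)\beta=\varepsilon(r)+\alpha\}$, which is a line in $\F^2$ and so nonempty, and $k=0$. This agrees with the recursion for $m=0$, where the empty sum gives $s=0$.

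\emph{Decomposition.} For the inductive step, split $x$ as $x=(x_r,x^{(1)},\ldots,x^{(m)})$ with $x^{(i)}\in\F^{V(T_i)}$, and put $\gamma_i=x^{(i)}_{r_i}$.

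The equation at the root reads
\[
\sum_i\gamma_i+\varepsilon(r)\beta=\varepsilon(r)+\alpha,\qquad \beta=x_r .
\]

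For a vertex $v\in T_i$, the only neighbor of $v$ outside $T_i$ is possibly $r$, and only when $v=r_i$. So the rows of the system indexed by $V(T_i)$ say exactly
\[
M(T_i,\varepsilon|_{T_i})x^{(i)}=\varepsilon|_{T_i}+\beta e_{r_i}.
\]
This is the defining system of $N_{T_i}(\beta,\gamma_i)$: the defect at $r_i$ is $\beta$ and the root value is $\gamma_i$. Note that the root value $\beta$ of $T$ becomes the defect parameter of each child, which matches the condition $(\beta,\gamma_i)\in L_i$ in the statement.

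\emph{Counting.} Summing over the choice of the $\gamma_i$, the subsystems decouple given $\beta$ and the $\gamma_i$. Therefore
\[
N_T(\alpha,\beta)=\sum_{(\gamma_i)}\prod_i N_{T_i}(\beta,\gamma_i),
\]
where the sum runs over tuples satisfying the root equation. By induction each summand equals $2^{\sum k_i}$ when all $(\beta,\gamma_i)\in L_i$, and $0$ otherwise. So $N_T(\alpha,\beta)=2^{\sum k_i}\cdot\#\Gamma(\alpha,\beta)$, where $\Gamma(\alpha,\beta)$ is the set of admissible tuples.

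\emph{Main step: affine structure.} The remaining work is to show that $L_T$ is a nonempty affine subspace and that $\#\Gamma$ is constant on it.

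Consider the set
\[
W=\{(\alpha,\beta,\gamma_1,\ldots,\gamma_m): (\beta,\gamma_i)\in L_i \text{ for all } i,\ \text{root equation holds}\}\subseteq\F^{2+m}.
\]
It is cut out by affine conditions: each $L_i$ is affine, and the root equation is affine-linear in $(\alpha,\beta,\gamma)$. Hence $W$ is an affine subspace, provided it is nonempty.

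For nonemptiness, fix $\beta=0$. Each $L_i$ is nonempty, but we need some $\gamma_i$ with $(0,\gamma_i)\in L_i$, and that is not automatic for an arbitrary affine subspace of $\F^2$. I expect this to be the real obstacle. To get around it, I would show $N_{T_i}(0,\cdot)$ is not identically zero. Theorem~\ref{thm:main-parity} says $M(T_i,\varepsilon)x=\varepsilon$ is solvable, so there is a solution with $\alpha=0$ and some root value $\gamma_i$. With $\beta=0$ chosen, choose such $\gamma_i$ for every $i$, and then let $\alpha$ be determined by the root equation. This gives a point of $W$.

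Once $W$ is a nonempty affine subspace, the projection $\pi:W\to\F^2$ onto $(\alpha,\beta)$ is the restriction of a linear map. So its image $L_T$ is a nonempty affine subspace. All nonempty fibers of $\pi$ are cosets of $\ker\pi\cap(W-W)$, so they all have size $2^s$ with $s$ independent of $(\alpha,\beta)\in L_T$. This gives $N_T=2^{\sum k_i+s}\mathbf 1_{L_T}$, completing the induction.
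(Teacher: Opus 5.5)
Your proposal is correct and follows the paper's proof essentially step for step. It uses the same block decomposition at the root, the same convolution formula $N_T(\alpha,\beta)=\sum\prod_i N_{T_i}(\beta,\gamma_i)$, and the same projection-of-an-affine-subspace argument for $L_T$ and for the constant fiber size $2^s$. The only cosmetic difference is in the nonemptiness step: you apply Theorem~\ref{thm:main-parity} to each child subtree to produce a point of $W$ with $\beta=0$, whereas the paper applies it once to $T$ itself to get $(0,x_r)\in L_T$; both work.
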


\begin{proof}
We argue by induction on the height of $T$.

If $T$ consists only of its root $r$, then the equation is
\[
        \varepsilon(r)x_r=\varepsilon(r)+\alpha.
\]
If $\varepsilon(r)=0$, this gives $\alpha=0$, with $\beta=x_r$ free.  Hence
\[
        L_T=\{(\alpha,\beta):\alpha=0\},
        \qquad
        N_T=(1,1,0,0).
\]
If $\varepsilon(r)=1$, this gives $\beta=1+\alpha$, or equivalently $\alpha+\beta=1$.  Hence
\[
        L_T=\{(\alpha,\beta):\alpha+\beta=1\},
        \qquad
        N_T=(0,1,1,0).
\]
Thus the assertion holds in height zero, with $k=0$.

Now suppose $T$ has child subtrees $T_1,\ldots,T_m$.  Fix the root value $\beta=x_r$.  On the child subtree $T_i$, the contribution of the parent $r$ appears only in the equation at $r_i$.  Thus the restriction $x_i$ of $x$ to $T_i$ must satisfy
\[
        M(T_i,\varepsilon_i)x_i=\varepsilon_i+\beta e_{r_i}.
\]
If $\gamma_i=x_{r_i}$, then the number of possible restrictions to $T_i$ is $N_{T_i}(\beta,\gamma_i)$.
The root equation is
\[
        \sum_{i=1}^m\gamma_i+\varepsilon(r)\beta=\varepsilon(r)+\alpha.
\]
Consequently,
\[
        N_T(\alpha,\beta)
        =
        \sum_{\substack{\gamma_1,\ldots,\gamma_m\in\F\\
        \sum_i\gamma_i+\varepsilon(r)\beta=\varepsilon(r)+\alpha}}
        \prod_{i=1}^m N_{T_i}(\beta,\gamma_i).
\]
By the induction hypothesis,
\[
        N_{T_i}(\beta,\gamma_i)=2^{k_i}{\bf 1}_{L_i}(\beta,\gamma_i).
\]
Thus $N_T(\alpha,\beta)$ is zero unless the displayed affine linear system in the variables $\gamma_1,\ldots,\gamma_m$ is solvable.  The set of pairs $(\alpha,\beta)$ for which it is solvable is the projection of an affine subspace of $\F^{m+2}$ onto the $(\alpha,\beta)$-coordinates.  Therefore it is an affine subspace of $\F^2$.

When the system is solvable, the set of solutions in the variables $\gamma_1,\ldots,\gamma_m$ is an affine subspace whose dimension is independent of the right-hand side inside the image of the projection.  Hence its cardinality is a fixed power $2^s$ for all $(\alpha,\beta)\in L_T$.  Therefore
\[
        N_T(\alpha,\beta)=2^{\sum_i k_i+s}
\]
for $(\alpha,\beta)\in L_T$, and $N_T(\alpha,\beta)=0$ otherwise.

Finally, $L_T$ is nonempty.  Indeed, by Theorem~\ref{thm:main-parity}, the system $M(T,\varepsilon)x=\varepsilon$ is solvable; for any such solution, $(\alpha,\beta)=(0,x_r)$ lies in $L_T$.  This completes the induction.
\end{proof}

\begin{corollary}\label{cor:tree-nullity}
Let
\[
        N_T(\alpha,\beta)=2^k{\bf 1}_{L_T}(\alpha,\beta)
\]
be the boundary description from Theorem~\ref{thm:tree-recursion}.  Then
\[
        \nul M(T,\varepsilon)
        =
        k+\dim\bigl(L_T\cap\{(\alpha,\beta):\alpha=0\}\bigr).
\]
\end{corollary}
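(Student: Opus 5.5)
The plan is to count solutions of the unperturbed system $M(T,\varepsilon)x=\varepsilon$ in two ways, using the boundary description at $\alpha=0$. By Theorem~\ref{thm:main-parity} this system is solvable, so its solution set $\mathcal S$ is an affine translate of $\ker M(T,\varepsilon)$. It follows that
\[
        |\mathcal S| = 2^{\nul M(T,\varepsilon)}.
\]

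Next, partition $\mathcal S$ by the root value $\beta=x_r$. By definition,
\[
        |\mathcal S| = N_T(0,0)+N_T(0,1).
\]
Theorem~\ref{thm:tree-recursion} gives $N_T(0,\beta)=2^k\mathbf 1_{L_T}(0,\beta)$, so
\[
        |\mathcal S| = 2^k\cdot\#\bigl(L_T\cap\{\alpha=0\}\bigr).
\]
The intersection of the affine subspace $L_T$ with the linear line $\{\alpha=0\}$ is an affine subspace of $\F^2$. It is nonempty, because $\mathcal S\neq\emptyset$ means some $(0,x_r)$ lies in $L_T$, exactly as noted at the end of the proof of Theorem~\ref{thm:tree-recursion}. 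A nonempty affine subspace $A$ of $\F^2$ has $2^{\dim A}$ elements, so
\[
        2^{\nul M(T,\varepsilon)} = 2^{k+\dim(L_T\cap\{\alpha=0\})}.
\]
Taking $\log_2$ finishes the proof.

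There is no real obstacle. The only point needing care is the nonemptiness of $L_T\cap\{\alpha=0\}$, so that its dimension is defined and its cardinality is a power of $2$, and this is supplied by Theorem~\ref{thm:main-parity}.
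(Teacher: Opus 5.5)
Your proposal is correct and follows the paper's proof essentially step for step. You count the solutions of $M(T,\varepsilon)x=\varepsilon$ as $N_T(0,0)+N_T(0,1)=2^k\,|L_T\cap\{(\alpha,\beta):\alpha=0\}|$, invoke Theorem~\ref{thm:main-parity} for nonemptiness so that the intersection has $2^{d}$ elements, and compare with $2^{\nul M(T,\varepsilon)}$, exactly as the paper does.
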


\begin{proof}
The original system $M(T,\varepsilon)x=\varepsilon$ corresponds to $\alpha=0$.  Therefore the number of solutions is
\[
        N_T(0,0)+N_T(0,1).
\]
By Theorem~\ref{thm:tree-recursion}, this number is
\[
        2^k\cdot |L_T\cap\{(\alpha,\beta):\alpha=0\}|.
\]
The intersection is nonempty by Theorem~\ref{thm:main-parity}.  It is an affine subspace of the line $\alpha=0$, so its cardinality is $2^d$, where
\[
        d=\dim\bigl(L_T\cap\{(\alpha,\beta):\alpha=0\}\bigr).
\]
Thus the solution set has cardinality $2^{k+d}$.  Since the solution set is a nonempty affine translate of $\ker M(T,\varepsilon)$, its cardinality is $2^{\nul M(T,\varepsilon)}$.  Hence $\nul M(T,\varepsilon)=k+d$.
\end{proof}

\begin{remark}
The affine-state formulation is necessary for arbitrary diagonal labels.  For a one-vertex tree with label $0$, one obtains
\[
        N_T=(1,1,0,0),
\]
which is not one of the three boundary vectors
\[
        (1,0,0,1),\qquad (0,1,1,0),\qquad (1,1,1,1).
\]
Thus a three-state recursion is insufficient for arbitrary binary diagonal labels.
\end{remark}

\section{Complete rooted trees and periodic labels}

Let $T_h^{(d)}$ be the complete rooted $d$-ary tree of height $h$, where every root-to-leaf path has length $h$.

\begin{definition}
Fix $d\ge1$.  Let $a=(a_0,a_1,a_2,\ldots)$ be a binary sequence.  The depth-dependent labeling of $T_h^{(d)}$ associated with $a$ assigns label $a_j$ to every vertex whose descendant subtree has height $j$.  Thus leaves receive label $a_0$, their parents receive label $a_1$, and the root receives label $a_h$.
\end{definition}

\begin{theorem}\label{thm:eventual}
Let $d\ge2$, and let $a=(a_0,a_1,a_2,\ldots)$ be an eventually periodic binary sequence.  Let $\varepsilon_h$ be the induced depth-dependent labeling of $T_h^{(d)}$.  Then the sequence of boundary affine subspaces
\[
        L_{T_h^{(d)}}\subseteq\F^2
\]
is eventually periodic.  Moreover, there exist integers $h_0\ge0$ and $p\ge1$ such that, for each residue class modulo $p$, there are rational constants $c_r,b_r$ satisfying
\[
        \nul M(T_h^{(d)},\varepsilon_h)=c_r d^h+b_r
\]
for all sufficiently large $h\equiv r\pmod p$.
\end{theorem}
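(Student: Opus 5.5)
The plan is to run the recursion of Theorem~\ref{thm:tree-recursion} on the complete $d$-ary tree and view it as a finite-state dynamical system driven by the label sequence. Write $L_h=L_{T_h^{(d)}}$ and let $k_h$ be its exponent. All $d$ child subtrees of the root of $T_h^{(d)}$ are copies of $T_{h-1}^{(d)}$ carrying the same depth labeling, since labels depend only on subtree height. Hence $L_h$ is determined by the pair $(L_{h-1},a_h)$. Concretely, $(\alpha,\beta)\in L_h$ iff there are $\gamma_1,\dots,\gamma_d$ with $(\beta,\gamma_i)\in L_{h-1}$ and $\sum\gamma_i+a_h\beta=a_h+\alpha$. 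The exponent of the number of such $\gamma$-tuples, $s_h$, is likewise a function $s(L_{h-1},a_h)$, and
\[
k_h=d\,k_{h-1}+s(L_{h-1},a_h).
\]
Write $L_h=\Phi(L_{h-1},a_h)$ for this update map.

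\emph{Step 1: periodicity of $L_h$.} There are only finitely many nonempty affine subspaces of $\F^2$ (at most $4+6+1=11$). Let $a$ be periodic with period $q$ from index $n_0$ on. Consider the states $(L_h,\ h\bmod q)$ for $h\ge n_0$. For $h\ge n_0$ the next state is a deterministic function of the current one, because $a_{h+1}$ depends only on $(h+1)\bmod q$. A deterministic map on a finite set is eventually periodic. So there are $h_0$ and $p$, with $q\mid p$, such that $L_{h+p}=L_h$ and $a_{h+p}=a_h$ for $h\ge h_0$. Consequently $s_{h+p}=s_h$ for $h\ge h_0+1$, and the intersection dimension
\[
e_h=\dim\bigl(L_h\cap\{\alpha=0\}\bigr)
\]
is also $p$-periodic.

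\emph{Step 2: solve the linear recursion.} Iterating $k_h=dk_{h-1}+s_h$ over one period gives
\[
k_{h+p}=d^pk_h+\sigma_r,\qquad \sigma_r=\sum_{j=1}^{p}d^{p-j}s_{h+j},
\]
where $\sigma_r$ depends only on $r\equiv h\pmod p$ once $h\ge h_0$. The affine recurrence $x\mapsto d^px+\sigma_r$ has fixed point $-\sigma_r/(d^p-1)$; this is where $d\ge2$ is used, to ensure $d^p\ne1$. Therefore
\[
k_{h_1+tp}=d^{tp}\Bigl(k_{h_1}+\tfrac{\sigma_r}{d^p-1}\Bigr)-\tfrac{\sigma_r}{d^p-1}.
\]
This has the form $c\,d^{h}+b$ with rational $c,b$ depending only on the residue class. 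Finally, Corollary~\ref{cor:tree-nullity} gives $\nul=k_h+e_h$. Since $e_h$ is constant on each residue class for large $h$, absorbing it into $b_r$ yields the claimed formula.

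\emph{Main obstacle.} The delicate point is bookkeeping rather than conceptual. One must check that $s_h$ really depends only on $(L_{h-1},a_h)$, which holds because Theorem~\ref{thm:tree-recursion} asserts the count $2^s$ is independent of $(\alpha,\beta)\in L_T$. One must also handle the indexing offset between labels $a_h$ and heights. If $h_0$ is taken large enough to cover both the preperiod of $a$ and the preperiod of the state orbit, these issues disappear. The base case $h=0$ is the single labeled vertex treated in the proof of Theorem~\ref{thm:tree-recursion}.
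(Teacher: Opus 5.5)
Your proposal is correct and follows essentially the same route as the paper: you establish eventual periodicity of the finite-state sequence $(L_h,\ \text{label phase})$, iterate the affine recurrence $k_h=d\,k_{h-1}+s(L_{h-1},a_h)$ over one period, and use Corollary~\ref{cor:tree-nullity} to absorb the periodic correction $\dim\bigl(L_h\cap\{\alpha=0\}\bigr)$. Your explicit fixed-point computation shows where $d\ge2$ enters (through $d^p\ne1$), which fills in a step the paper leaves implicit.
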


\begin{proof}
For a complete rooted $d$-ary tree, all child subtrees at a fixed height have the same boundary state.  By Theorem~\ref{thm:tree-recursion}, the transition from height $h$ to height $h+1$ depends only on the current affine subspace $L_h\subseteq\F^2$, the current exponent $k_h$, the arity $d$, and the next label $a_{h+1}$.

There are only finitely many affine subspaces of $\F^2$.  Since the label sequence $a$ is eventually periodic, the combined sequence consisting of the current affine subspace and the current phase of the label period is eventually periodic.

At each step, the exponent has the form
\[
        k_{h+1}=d k_h+c_h,
\]
where $c_h$ depends only on the current boundary subspace, the arity $d$, and the current label.  Along one eventual period of length $p$, this gives an affine recurrence
\[
        k_{h+p}=d^p k_h+C
\]
for a constant $C$ depending only on the residue class.  Iterating this recurrence on a fixed residue class modulo $p$ gives
\[
        k_h=c_r d^h+b'_r
\]
for rational constants $c_r,b'_r$.

By Corollary~\ref{cor:tree-nullity}, the nullity differs from $k_h$ by
\[
        \dim\bigl(L_h\cap\{(\alpha,\beta):\alpha=0\}\bigr),
\]
which is eventually periodic.  Absorbing this bounded periodic correction into the constants on each residue class gives the stated formula.
\end{proof}

\begin{remark}
When $d=1$, the same finite-state argument gives eventual affine behavior in $h$ on residue classes rather than affine behavior in $d^h$.
\end{remark}

\subsection*{Concluding problems}

The results above suggest several concrete directions.

\begin{problem}
Determine the distribution of $\rank(A(G)+D)$ as $D$ ranges over all diagonal matrices over $\F$ for a fixed graph $G$.
\end{problem}

\begin{problem}
Find graph classes beyond trees for which the generalized odd-domination spaces admit a finite-state boundary recursion.  Natural candidates include caterpillars, unicyclic graphs, rooted products, and bounded-treewidth graphs.
\end{problem}

\begin{problem}
Determine how much of a graph $G$ is encoded by the family of affine spaces
\[
        \{x:(A(G)+D)x=\diag(D)\},
\]
as $D$ ranges over all diagonal matrices.
\end{problem}


\begin{thebibliography}{9}

\bibitem{Batal2022}
A. Batal,
Parity of an odd dominating set,
\emph{Commun. Fac. Sci. Univ. Ank. Ser. A1 Math. Stat.} \textbf{71} (2022), no. 4, 1023--1028.

\bibitem{Filmus2010}
Y. Filmus,
Range of symmetric matrices over GF(2),
unpublished manuscript, 2010.
Available at \url{https://yuvalfilmus.cs.technion.ac.il/Manuscripts/Range.pdf}.

\bibitem{Minevich2012}
I. Minevich,
Symmetric matrices over $\mathbb F_2$ and the Lights Out problem,
arXiv:1206.2973.

\bibitem{Pless1964}
V. Pless,
On Witt's theorem for nonalternating symmetric bilinear forms over a field of characteristic 2,
\emph{Proc. Amer. Math. Soc.} \textbf{15} (1964), 979--983.

\bibitem{Sutner1989}
K. Sutner,
Linear cellular automata and the Garden-of-Eden,
\emph{Math. Intelligencer} \textbf{11} (1989), no. 2, 49--53.

\end{thebibliography}
\end{document}